\newlength{\hchng}
\newlength{\vchng}
\newcommand {\bc} {\begin{center}}
\newcommand {\ec} {\end{center}}
\theoremstyle{plain}
\newtheorem{thm}{Theorem}[section]
\newtheorem{lem}[thm]{Lemma}
\newtheorem{defn}[thm]{Definition}
\newtheorem{prop}[thm]{Proposition}
\newtheorem{ques}[thm]{Question}
\newtheorem{rem}{Remark}[section]
\newtheorem{conj}{Conjecture}
\newenvironment{proof}[1]{\begin{trivlist} \item[] {\em Proof of #1:}}{\hfill $\Box$
                      \end{trivlist}}
\newcommand{\ud}{\,\mathrm{d}}
\newcommand{\la}{\lambda}
\newcommand{\R}{\mathbb{R}}
\newcommand{\pa}{\partial}
\newcommand{\norm}[1]{\left\lVert#1\right\rVert}
\newcommand{\Addresses}{{
  \bigskip
  \footnotesize

  T.~Beck, \textsc{Department of Mathematics, University of North Carolina,
    Chapel Hill, North Carolina}\par\nopagebreak
  \textit{E-mail address}: \texttt{tdbeck@email.unc.edu}

}}
\title{Localization of the first eigenfunction of a convex domain}
\author{Thomas Beck}
\date{\today}                                           
\begin{document}
\maketitle
\begin{abstract}
We study the first Dirichlet eigenfunction of the Laplacian in a $n$-dimensional convex domain. For domains of a fixed inner radius, estimates of Chiti \cite{Ch1}, \cite{Ch2}, imply that the ratio of the $L^2$-norm and $L^{\infty}$-norm of the eigenfunction is minimized when the domain is a ball. However, when the eccentricity of the domain is large the eigenfunction should spread out at a certain scale and this ratio should increase. We make this precise by obtaining a lower bound on the $L^2$-norm of the eigenfunction  and show that the eigenfunction cannot localize to too small a subset of the domain. As a consequence, we settle a conjecture of van den Berg, \cite{vdB}, in the general $n$-dimensional case. The main feature of the proof is to obtain sufficiently sharp estimates on the first eigenvalue in order to estimate the first  derivatives of the eigenfunction.

\end{abstract}
\section{Introduction and statement of results}

Let $\Omega\subset \R^n$ be a convex domain and let $\lambda$ be the first eigenvalue of the Dirichlet Laplacian on $\Omega$. We denote the corresponding eigenfunction by $u$ so that
$$
\begin{cases}
(\Delta+\lambda)u  =  0 \text{ in } \Omega \\
\phantom{(\Delta+\lambda)}u  =  0 \text{ on } \pa\Omega. 
\end{cases}
$$
This first eigenfunction is of one sign, and we choose it so that $u(x)>0$ in $\Omega$. Our starting point for studying the behaviour of $u$ and its level sets is that the convexity of $\Omega$ ensures that $u$ is log-concave, \cite{BL}. In particular the superlevel sets
\begin{align*}
\{x\in\Omega:u(x)>c\}
\end{align*}
are convex subsets of $\Omega$. It is natural to study the shape of the level sets of $u$ and how they depend on the geometry of $\Omega$ and the level under consideration. The quantity $|u(x)|^2$ can be interpreted as an (unnormalized) density for a free quantum particle in the domain $\Omega$. The shape and location of the superlevel sets where $u$ is comparable to its maximum value therefore correspond to the parts of $\Omega$ where the particle is most likely to be found. In this paper, we will obtain a lower bound on the $L^2(\Omega)$-norm of $u$ in terms of its $L^{\infty}(\Omega)$-norm and length scales coming from the shape of $\Omega$ (see Theorem \ref{thm:Main} below). 
 Where Laplace eigenfunctions localize, that is the region of $\Omega$ where they are of large magnitude relative to the rest of the domain, has received recent attention. For example, the torsion function has been used as a landscape function for predicting where Laplace eigenfunctions will localize, \cite{FM}, \cite{ADJMF}, \cite{SS1}. While in general the first eigenfunction can localize to a small subset of $\Omega$, relative to $\Omega$ itself, our result will place a restriction on how small this region can be.

In \cite{Ch1}, \cite{Ch2}, Chiti provides a lower bound on the $L^{2}(\Omega)$-norm of $u$ of the form
\begin{align} \label{eqn:Chiti}
\norm{u}_{L^{2}(\Omega)} \geq c_n^*\text{ inrad}(\Omega)^{n/2}\norm{u}_{L^{\infty}(\Omega)}
\end{align}
Here inrad$(\Omega)$ is the inner radius of $\Omega$.  The constant $c_n^*>0$ depends only on the dimension, and is explicitly given in terms of Bessel functions (and their zeros). (In fact, this bound  holds for any bounded, connected domain $\Omega$.) Moreover, the constant $c_n^*$ cannot be improved since equality in \eqref{eqn:Chiti} holds when $\Omega$ is a ball. However, for $\Omega$ convex and when the diameter of $\Omega$ is large compared to its inner radius, one expects the eigenfunction to \textit{spread out} along the diameter of $\Omega$, and for the $L^{2}(\Omega)$-norm to increase relative to the $L^{\infty}(\Omega)$-norm. In terms of the estimate in \eqref{eqn:Chiti}, the question is then whether an estimate of the form
\begin{align} \label{eqn:Chiti2}
\norm{u}_{L^{2}(\Omega)} \geq c_n \left(\text{diam}(\Omega)/\text{inrad}(\Omega)\right)^{\alpha}\text{inrad}(\Omega)^{n/2}\norm{u}_{L^{\infty}(\Omega)}
\end{align}
holds for all convex $\Omega$, and some uniform $\alpha>0$. Repeated applications of the Harnack inequality in overlapping balls is not sufficient to establish \eqref{eqn:Chiti2} for any $\alpha>0$, and so any improvement of \eqref{eqn:Chiti} must use the fact that $u$ is an eigenfunction in a fundamental way. Kr\"oger, \cite{Kr1}, in two dimensions, and van den Berg, \cite{vdB}, in higher dimensions studied the first eigenfunction of a thin sector. Via a separation of variables in polar coordinates, and the properties of the resulting Bessel function in the radial variable, this example of the sector ensures that the maximal value of $\alpha$ for which \eqref{eqn:Chiti2} could hold is $\alpha = \tfrac{1}{6}$. Based on the intuition that the sector should be the convex domain for which the eigenfunction spreads out the least, van den Berg made the following conjecture:
\begin{conj}[\cite{vdB}] \label{conj:vdB}
There exists a constant $c_n>0$, depending only on the dimension $n$, such that 
\begin{align*}
\norm{u}_{L^{2}(\Omega)} \geq c_n \left(\emph{diam}(\Omega)/\emph{inrad}(\Omega)\right)^{1/6}\emph{inrad}(\Omega)^{n/2}\norm{u}_{L^{\infty}(\Omega)}.
\end{align*}
\end{conj}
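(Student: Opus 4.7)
The plan is to show that $u$ is comparable to $\|u\|_{L^{\infty}}$ on a subset of $\Omega$ of volume at least $c \cdot \mathrm{inrad}(\Omega)^n \cdot (\mathrm{diam}(\Omega)/\mathrm{inrad}(\Omega))^{1/3}$, so that a slice-wise $L^2$ estimate produces the exponent $1/6$ in the conjecture. The target scale is suggested by the sector case: the radial Bessel profile $J_k(j_{k,1}r/R)$ is peaked in an Airy-type window of width $R\,k^{-2/3}$ around its maximum, the transverse width is of order $R/k$, and $\mathrm{diam}/\mathrm{inrad} \asymp k$.

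After scaling assume $\mathrm{inrad}(\Omega)=1$ and set $D = \mathrm{diam}(\Omega)$. Choose coordinates so the diameter lies along $e_1$, let $\Omega_t = \Omega \cap \{x_1 = t\}$ for $t \in [a,b]$, and introduce the slice-max profile
$$
M(t) = \sup_{\Omega_t} u.
$$
By the log-concavity of $u$ from \cite{BL}, $M$ is log-concave on $[a,b]$, vanishes at the endpoints, and attains its supremum $M^* = \|u\|_{L^{\infty}(\Omega)}$ at some interior $t_0$. The argument then splits. Transversally, once the slices $\Omega_t$ near $t_0$ are known to have inner radius bounded below, the convexity of $\Omega$ together with the log-concavity of $u$ on each slice forces $u(t,\cdot) \gtrsim M(t)$ on a transverse ball of radius comparable to one. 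Longitudinally, the goal is to show that $M(t) \geq M^*/2$ on an interval of length $\gtrsim D^{1/3}$ centered at $t_0$.

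The heart of the proof is a sharp two-sided comparison
$$
\lambda(\Omega) \leq \mu(t_0) + C D^{-2/3},
$$
where $\mu(t)$ denotes the first Dirichlet eigenvalue of $\Omega_t$. The upper bound would be obtained from a trial function of the form $\varphi(x_1)\psi(x_1,x')$, in which $\psi(t,\cdot)$ approximates the slice ground state and $\varphi$ is an Airy-type profile supported on the scale $D^{1/3}$. The matching lower bound on the spreading is then extracted by projecting the eigenvalue equation onto the slice ground state, producing an effective one-dimensional operator in $x_1$ with potential $\mu(t) - \mu(t_0)$; convexity of $\Omega$ controls this potential near its minimum at $t_0$. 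Combining these estimates with elliptic gradient control on $u$ (for which the sharpness of the bound on $\lambda$ is again essential, as emphasized in the abstract) forces $M$ to spread on the Airy scale and gives
$$
\|u\|_{L^{2}(\Omega)}^{2} \;\geq\; c_n \int_{|t-t_0|\leq c D^{1/3}} M(t)^2\, dt \;\geq\; c_n\, D^{1/3}\, (M^*)^2,
$$
which is the conjecture after taking square roots and restoring $\mathrm{inrad}(\Omega)$.

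The main obstacle I anticipate is uniformity of constants over convex $\Omega$ and dimension. The slice eigenvalue $\mu(t)$ and ground state $\psi(t,\cdot)$ depend nonsmoothly on $t$ and degenerate near the ends of $[a,b]$ where $\Omega_t$ shrinks; one must restrict attention to a central subinterval on which an inscribed ball of controlled size persists. More importantly, any slack beyond $D^{-2/3}$ in the eigenvalue estimate immediately destroys the Airy scaling and with it the exponent $1/6$, so the $t$-derivatives of $\mu$ and $\psi$ must be controlled sharply using only the convex geometry of $\Omega$. Once the eigenvalue bound is secured at the correct scale, the remaining step of integrating the transverse lower bound for $u^2$ over the spreading window is essentially routine.
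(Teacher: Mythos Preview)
Your high-level plan coincides with the paper's: obtain the sharp eigenvalue bound $\lambda \le \mu_1^* + CD^{-2/3}$, convert it into a bound on $\partial_{x_1}u$, and combine with log-concavity to get Airy-scale spreading of the $L^2$ mass and hence the exponent $1/6$. Where you diverge is the mechanism. Your proposed trial function $\varphi(x_1)\psi(x_1,x')$ with a $t$-dependent slice ground state, together with projection of the eigenvalue equation onto $\psi(t,\cdot)$ to obtain an effective one-dimensional operator, is precisely the Grieser--Jerison scheme that underlies the known two-dimensional proof. You correctly flag the obstruction: in $n\ge 3$ one must control the $t$-variation of the $(n-1)$-dimensional slice eigenfunction $\psi(t,\cdot)$, and any loss beyond $D^{-2/3}$ destroys the argument. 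Your proposal does not supply a method for this control, and this is exactly the gap the paper has to bridge.

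The paper's resolution is different from what you sketch in two respects. First, rather than a variable $\psi(t,\cdot)$, the test function uses the \emph{fixed} eigenfunction $\psi$ of the optimal cross-section $\Omega(Y_k)$, extended by a conical rescaling $\psi\big(X'_{n-k}\,R_1/(R_1-r_k)\big)$ and cut off on a cube of side $N_k^{1/3}$. Differentiating in the longitudinal variables then produces terms $\sum_{j>k} x_j\,\partial_{x_j}\psi$, and the crucial point is that these are controlled not by analyzing how slices vary, but by a gradient bound on $\psi$ itself: $\int|\partial_{x_j}\psi|^2 \le CN_j^{-2/3}$. This bound is obtained by proving the eigenvalue inequality $\lambda(W)\le \mu_i^*(W)+C N_{k+i}^{-2/3}$ for cross-sections $W$ of \emph{every} intermediate codimension, by downward induction on the codimension (Proposition~2.3 and Lemma~2.4). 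In other words, to get the codimension-one eigenvalue bound you want for $j=1$, one first needs the analogous bounds for $j=2,\dots,n$; this inductive structure is the new idea absent from your sketch. Second, the conversion from eigenvalue bound to gradient bound does not go through a slice projection or an effective ODE. It is the one-line energy identity (Proposition~2.2): using $u(X_j,\cdot)$ as a test function on each cross-section gives $\sum_{\ell\le j}\int|\partial_{x_\ell}u|^2 \le (\lambda-\mu_j^*)\int|u|^2$ directly. The spreading is then established in $L^2$ (Lemma~3.2), not for the pointwise slice maximum $M(t)$ you introduce, and the transverse factor comes from the slice of largest $L^2$ mass rather than from a pointwise lower bound on $u$ over a transverse ball.

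So your outline is on the right track and correctly isolates the decisive step, but as written it would stall in dimensions $n\ge 3$ at exactly the point you anticipate. The missing ingredient is the inductive eigenvalue bound over all cross-sectional codimensions, which both replaces and obviates the delicate control of $\partial_t\psi$ that a direct extension of the Grieser--Jerison approach would require.
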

The two dimensional case of this conjecture has been established in \cite{GMS}. Their proof uses an eigenvalue bound for the first eigenvalue of a class of one dimensional Schr\"odinger operators, and the work of Grieser and Jerison, \cite{Je1}, \cite{GJ1}, on the first eigenfunction of a convex, planar domain.

In this paper, we bound $\norm{u}_{L^2(\Omega)}$ from below in the general $n$-dimensional case. We call $K$ a John ellipsoid associated to $\Omega\subset\R^n$ if $K$ is contained within $\Omega$ and the dilation of $K$ about its centre with scaling factor $n$ contains $\Omega$. John's lemma \cite{Jo} ensures that such an ellipsoid $K$ exists. We now fix a John ellipsoid $K$ and define $N_j$ to be the lengths of the axes of $K$ with
\begin{align*}
N_1\geq N_2\geq\cdots \geq N_n.
\end{align*}
Our main theorem provides a lower bound on the scale at which the eigenfunction can localize by establishing a lower bound on the $L^{2}(\Omega)$-norm of $u$ in terms of its $L^{\infty}(\Omega)$-norm, and the length scales $N_j$. 
\begin{thm} \label{thm:Main}
There exists a constant $c_n>0$, depending only on the dimension $n$, such that
\begin{align*}
\norm{u}_{L^2(\Omega)} \geq c_n N_{n}^{n/2}\prod_{j=1}^{n-1}\left(N_j/N_n\right)^{1/6} \norm{u}_{L^{\infty}(\Omega)}. 
\end{align*}
\end{thm}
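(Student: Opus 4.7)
The plan is to show that $u$ is comparable to $\|u\|_{L^{\infty}(\Omega)}$ on a box of side lengths $\sim N_j^{1/3} N_n^{2/3}$ in each long direction $j = 1, \ldots, n-1$ and $\sim N_n$ in the remaining direction, centered at the maximizer of $u$. Integrating $u^2$ over this box and taking the square root produces exactly the claimed lower bound, since the total volume is $\sim N_n^{n} \prod_{j<n}(N_j/N_n)^{1/3}$. The heuristic motivating this spreading scale is the thin-sector analysis of Kr\"oger \cite{Kr1} and van den Berg \cite{vdB}: after separation of variables, the radial factor is a Bessel function of large order $\nu = \pi/\theta_0$, whose first positive zero lies at $\nu + O(\nu^{1/3})$, so the Airy-like transition region near the boundary has width exactly $N_1^{1/3} N_n^{2/3}$.

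First I would normalize $N_n = 1$ by scaling and rotate so that the axes of a John ellipsoid $K$ lie along the coordinate directions, making the extent of $\Omega$ along $e_j$ comparable to $N_j$. The second step is a sharp upper bound of the form
\[
\lambda \le \lambda_0 + C \sum_{j=1}^{n-1} N_j^{-2/3},
\]
where $\lambda_0$ is the first Dirichlet eigenvalue of the unit ball. The upper bound comes from a product test function that combines the first ball eigenfunction in the short directions with Airy-type profiles of scale $N_j^{1/3}$ in each long direction; each such factor pays an energy cost of order $N_j^{-2/3}$.

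The third and most delicate step is to convert this eigenvalue estimate into the directional first-derivative bound $|\partial_j u| \le C N_j^{-1/3} \|u\|_{L^{\infty}(\Omega)}$ on a neighborhood of the maximizer of $u$. Following the hint in the abstract, I would substitute the eigenfunction equation into carefully chosen integral identities with cutoffs adapted to each axis, so that the $L^2$-mass of $\partial_j u$ is isolated and dominated by the portion of $\lambda - \lambda_0$ attributable to that direction. Log-concavity of $u$ from \cite{BL} then propagates the pointwise derivative bound along line segments, producing the desired box on which $u \ge \tfrac{1}{2} \|u\|_{L^{\infty}(\Omega)}$; undoing the scaling completes the argument.

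The main obstacle is the decoupling in the third step: the eigenvalue estimate only supplies the aggregate correction $\sum_j N_j^{-2/3}$, whereas the argument needs one $N_j^{-2/3}$ bound per direction. Extracting the $j$-th contribution seems to require applying the variational principle with test functions that isolate a single axis at a time, most likely via an induction on the number of long directions or a one-dimensional Schr\"odinger-type reduction in the spirit of the Grieser--Jerison analysis \cite{GJ1} used for the two-dimensional case in \cite{GMS}. Carrying out this directional decoupling uniformly in dimension $n$, together with the matching first-derivative estimate, is what I expect to be the crux of the proof.
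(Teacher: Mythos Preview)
Your overall architecture is right: eigenvalue bounds feed into directional derivative bounds, which together with log-concavity force the superlevel set $\{u>\tfrac12\max u\}$ to contain a box of the correct dimensions. The paper follows exactly this outline. But the specific eigenvalue bound you propose, $\lambda \le \lambda_0 + C\sum_{j<n} N_j^{-2/3}$ with a single reference value $\lambda_0$, is the wrong object, and you have correctly diagnosed why: it only yields an aggregate and gives no mechanism for extracting the $N_j^{-2/3}$ contribution of a fixed direction $j$.

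The missing idea is to replace the single $\lambda_0$ by a \emph{family} of reference values, one for each $j$. For $1\le j\le n-1$ let $\mu_j^*$ be the minimum, over all $(n-j)$-dimensional cross-sections $\Omega(X_j)=\{x\in\Omega: (x_1,\ldots,x_j)=X_j\}$, of the first Dirichlet eigenvalue of that cross-section. The elementary variational identity
\[
\sum_{\ell=1}^{j}\int_\Omega |\partial_{x_\ell}u|^2
= \lambda\int_\Omega |u|^2 - \sum_{\ell=j+1}^{n}\int_\Omega |\partial_{x_\ell}u|^2
\le (\lambda-\mu_j^*)\int_\Omega |u|^2
\]
then isolates the first $j$ directions automatically. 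So the decoupling you flag as the crux is not obtained by clever cutoffs; it is built into the choice of comparison eigenvalue. What remains is to prove $\lambda\le \mu_j^*+C N_j^{-2/3}$ for each $j$, and this is done by \emph{downward induction} on $j$: the test function for step $k$ is the cross-sectional eigenfunction $\psi$ on $\Omega(Y_k)$, rescaled conically toward a far point of $\Omega$ and cut off at scale $N_k^{1/3}$. The error from differentiating in the $X_k$ variables involves $\partial_{x_j}\psi$ for $j>k$, and these are controlled precisely by the inductive hypothesis applied to the lower-dimensional domain $\Omega(Y_k)$.

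Two smaller corrections. First, the derivative bounds one actually obtains are $L^2$, namely $\|\partial_{x_j}u\|_{L^2(\Omega)}\le C N_j^{-1/3}\|u\|_{L^2(\Omega)}$, not the pointwise $|\partial_j u|\le C N_j^{-1/3}\|u\|_\infty$ you aim for; the passage to the box estimate goes through an $L^2$ slicing argument (integrate $|u|^2$ over hyperplanes orthogonal to a suitable direction and use Cauchy--Schwarz) combined with the log-concavity bound $\|u\|_{L^2}^2\sim \prod_j M_j$. Second, your Airy-profile test function built from the ball eigenfunction would not see the geometry of $\Omega$ sharply enough; the cross-sectional eigenfunctions $\psi$ are what make the test function admissible after the conical rescaling.
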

In particular, $\prod_{j=1}^{n-1}\left(N_j/N_n\right)^{1/6}\geq (N_1/N_n)^{1/6}$, and $N_1$, $N_n$ are comparable to the diameter, inner radius of $\Omega$, up to a factor depending only on $n$. Therefore, Theorem \ref{thm:Main} settles Conjecture \ref{conj:vdB}.
\begin{rem} \label{rem:Main}
Let $M_1\geq M_2\geq\cdots \geq M_n$ be the lengths of the axes of a John ellipsoid for the superlevel set $\{x\in\Omega:u(x)>\tfrac{1}{2}\max_{\Omega}u\}$. In the course of proving Theorem \ref{thm:Main} we will show that $M_j\geq c_nN_j^{1/3}$ for some constant $c_n>0$. In terms of localization, this shows that the eigenfunction does not localize in a subset of $\Omega$ smaller than this.
\end{rem}
To prove Theorem \ref{thm:Main} we first obtain an upper bound on the directional derivatives of $u$ in terms of the length scales $N_j$. After a rotation we will assume that the axes of $K$ lie along the coordinate axes. 
\begin{thm} \label{thm:Main1}
There exists a constant $C_n$, depending only on the dimension $n$, such that for each $j$, $1\leq j \leq n$, the derivative $\pa_{x_j}u(x)$ satisfies
\begin{align*}
\norm{\pa_{x_j}u}_{L^2(\Omega)} \leq C_n N_n^{-1}\left(N_j/N_n\right)^{-1/3} \norm{u}_{L^2(\Omega)}.
\end{align*}
\end{thm}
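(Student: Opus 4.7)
The plan is to reduce Theorem~\ref{thm:Main1} to a sharp eigenvalue estimate, and then to establish that estimate via the variational principle applied to an effective one-dimensional Schr\"odinger operator in the $x_j$ direction. Write $x=(x_j,x')$ with $x'\in\R^{n-1}$, and for each $t$ let $\Omega_t:=\Omega\cap\{x_j=t\}$, with first Dirichlet eigenvalue $E_j(t)$ and normalized ground state $\phi_t$; set $E_j^{\ast}:=\inf_t E_j(t)$. The slicewise Poincar\'e inequality reads
\begin{equation*}
\int_{\Omega_t}|\nabla_{x'} u(t,x')|^2\,dx' \;\geq\; E_j(t)\int_{\Omega_t} u(t,x')^2\,dx'.
\end{equation*}
Integrating in $t$ and subtracting from $\int_{\Omega}|\nabla u|^2\,dx=\lambda\norm{u}_{L^2(\Omega)}^2$ gives
\begin{equation*}
\norm{\pa_{x_j}u}_{L^2(\Omega)}^2 \;\leq\; (\lambda-E_j^{\ast})\norm{u}_{L^2(\Omega)}^2,
\end{equation*}
so Theorem~\ref{thm:Main1} follows once we prove the sharp bound $\lambda-E_j^{\ast}\leq C_n N_n^{-2}(N_j/N_n)^{-2/3}$.

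To bound $\lambda-E_j^{\ast}$, I would apply the Rayleigh quotient to the separated test function $v(t,x'):=\chi(t)\phi_t(x')$. Since $\int_{\Omega_t}\phi_t^2\,dx'\equiv 1$ forces $\int_{\Omega_t}\phi_t\pa_t\phi_t\,dx'=0$, a direct computation gives
\begin{equation*}
\int_{\Omega}|\nabla v|^2\,dx = \int\bigl(\chi'(t)^2+V_j(t)\chi(t)^2\bigr)dt, \qquad \int_{\Omega}v^2\,dx=\int\chi(t)^2\,dt,
\end{equation*}
with effective potential $V_j(t):=E_j(t)+\int_{\Omega_t}|\pa_t\phi_t|^2\,dx'$. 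Thus $\lambda$ is dominated by the first eigenvalue of the one-dimensional operator $-d^2/dt^2+V_j(t)$ on the $x_j$-range of $\Omega$. Taking $\chi$ to be an Airy-type profile concentrated on the scale $N_n^{2/3}N_j^{1/3}$ near the minimizer of $V_j$ and optimizing the resulting 1D Rayleigh quotient is designed to yield the target inequality. The needed growth of $V_j-E_j^{\ast}$ would come from the convexity of $\Omega$: concavity of $t\mapsto\mathrm{inrad}(\Omega_t)$ (a direct consequence of convexity) controls the growth of $E_j(t)-E_j^\ast$ away from the minimizer and produces the correct Airy scale, while the adiabatic correction $\int|\pa_t\phi_t|^2\,dx'$ should be lower order and controllable by the smooth variation of the slice family.

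The main obstacle is the sharp Airy-scale estimate itself: the exponent $2/3$ is saturated by the planar sector (where $E_j(t)$ attains its minimum at an endpoint of its range with linear growth away from it), so no loss is allowed anywhere. Two features must be handled in tandem: (i) the growth of $E_j(t)-E_j^{\ast}$ has to be tracked uniformly in terms of the scales $N_k$ for all $k$ -- this is where the product over $j$ in Theorem~\ref{thm:Main} ultimately originates when the individual derivative bounds are assembled into an $L^2$-norm lower bound on $u$; and (ii) the adiabatic term $\int|\pa_t\phi_t|^2\,dx'$ must itself be controlled at the same Airy scale. A secondary technicality is that $\phi_t$ need not be smooth in $t$ for a Lipschitz convex family $\{\Omega_t\}$, so either a smooth approximation of $\Omega$ or a direct min-max formulation of the 1D effective problem will likely be needed to make the separation of variables rigorous.
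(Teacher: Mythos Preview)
Your reduction in the first paragraph is essentially the paper's Proposition~\ref{prop:derivative}: slicewise Poincar\'e plus $\int|\nabla u|^2=\lambda\int u^2$ gives $\norm{\pa_{x_j}u}_{L^2}^2\leq(\lambda-E_j^\ast)\norm{u}_{L^2}^2$, so everything rests on the eigenvalue bound $\lambda-E_j^\ast\leq C_n N_j^{-2/3}$ (normalizing $N_n=1$). The divergence from the paper, and the genuine gap, is in how you propose to establish that bound.

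Your test function $v(t,x')=\chi(t)\phi_t(x')$ uses the \emph{varying} ground state $\phi_t$ of the $(n-1)$-dimensional slice $\Omega_t$, and the adiabatic term $\int_{\Omega_t}|\pa_t\phi_t|^2\,dx'$ is not lower order in general --- it is the heart of the difficulty. If the slice $\Omega_t$ shifts in the $x_k$-direction as $t$ varies (which it does, at unit rate, whenever the axes of $\Omega$ are oblique to the slicing), then $\pa_t\phi_t$ behaves like $\pa_{x_k}\phi_t$, and $\int|\pa_{x_k}\phi_t|^2$ is exactly the quantity Theorem~\ref{thm:Main1} is trying to bound, one dimension down. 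Controlling the adiabatic term is thus circular unless you set up an induction, and single-coordinate slicing does not support one: the slice $\Omega_t$ carries \emph{all} the remaining scales $N_1,\ldots,N_{j-1},N_{j+1},\ldots,N_n$, so there is no smaller instance of the same problem to appeal to.

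The paper resolves this with two coupled moves. First, instead of fixing only $x_j$ it fixes the block $X_k=(x_1,\ldots,x_k)$ and works with the $(n-k)$-dimensional cross-sections $\Omega(Y_k)$; this makes a downward induction on $k$ natural. Second, it does not use the varying family $\phi_t$ at all: it takes the \emph{fixed} ground state $\psi$ of the extremal cross-section and rescales it radially, $w(x)=\chi(X_k)\,\psi\bigl(X_{n-k}'\,R_1/(R_1-r_k)\bigr)$, using the convex hull of the extremal slice with a far set to ensure $w\in H_0^1(\Omega)$. The analogue of your adiabatic term becomes $N_1^{-2}\sum_{\ell>k}N_\ell^{2}\int|\pa_{x_\ell}\psi|^2$, and these derivatives of the \emph{fixed} cross-sectional eigenfunction are controlled by the inductive hypothesis (Lemma~\ref{lem:gradient}), yielding a contribution bounded by $CN_k^{-2/3}$. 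That closed induction is the missing idea in your proposal; without it, the assertion that the adiabatic correction is ``controllable by the smooth variation of the slice family'' does not go through at the sharp Airy scale.
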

\begin{rem} \label{rem:Main1}
If we denote $u_m$ to be the $m$-th Dirichlet eigenfunction of $\Omega$, then the estimate in Theorem \ref{thm:Main1} continues to hold, with a constant $C_n$ replaced by a constant $C_{m,n}$ depending only on $m$ and $n$.
\end{rem}
Via a dilation we can also assume that $N_n=1$ when proving these theorems, and by taking a constant multiple of $u$, we also assume that $\max_{\Omega}u = 1$. To prove Theorem \ref{thm:Main1} we will begin by using the eigenfunction equation to write
\begin{align*}
\int_{\Omega}\left|\nabla u\right|^2 \ud x = \la \int_{\Omega}|u|^2\ud x,
\end{align*}
and we will also use the variational formulation of the first eigenvalue,
\begin{align*}
\la = \inf\left\{ \frac{\int_{\Omega}\left|\nabla v\right|^2\ud x}{\int_{\Omega}|v|^2 \ud x}: v\in H_0^1(\Omega), v\neq0\right\}.
\end{align*}
These can be combined to reduce the proof of  Theorem \ref{thm:Main1} to obtaining sufficiently sharp upper bounds on the eigenvalue $\la$ in terms of the eigenvalues of $(n-j)$-dimensional cross-sections of $\Omega$ (see Proposition \ref{prop:derivative}). We prove the desired eigenvalue bounds by induction on $j$, and will carry out the proof in Section \ref{sec:gradient}. To prove Theorem \ref{thm:Main}, we will also use in a crucial way the log concavity of the eigenfunction $u$, \cite{BL}. In particular, this will allow us to reduce estimating the $L^2(\Omega)$-norm of $u$ to estimating the lengths of the axes of a John ellipsoid associated to the superlevel set
\begin{align*}
\Omega_{1/2} = \left\{x\in\Omega: u(x)>\tfrac{1}{2}\right\}.
\end{align*}
The desired estimate follows from using the derivative bounds in Theorem \ref{thm:Main1}, and we will prove Theorem \ref{thm:Main} in Section \ref{sec:L2}. Finally, in Section \ref{sec:2d} we discuss known estimates in the two dimensional case, and future directions in higher dimensions. In \cite{Je1}, Jerison introduces a length scale $L$ depending on the geometry of the convex, planar domain, and together with Grieser uses it to study the shape of the first (and second) eigenfunction, \cite{GJ1}, \cite{GJ2}. In particular, their results imply comparable upper and lower bounds on $\norm{u}_{L^2(\Omega)}$ in terms of this length scale $L$. It is natural to ask how to construct analogous length scales controlling the shape of the first eigenfunction in higher dimensions, and in Section \ref{sec:2d} we discuss this in more detail.
\begin{rem} \label{rem:constant}
Throughout, constants which we will denote by $C,C_1,c_1$ etc, are constants which depend only on the dimension. We also say that two quantities are comparable (and write as $\sim$) if they can be bounded in terms of each other up to a constant depending only on $n$.
\end{rem}

\section{Gradient bounds for the eigenfunction} \label{sec:gradient}

In this section we prove Theorem \ref{thm:Main1}. The key step in the proof is to obtain appropriate upper bounds on the eigenvalue $\lambda$. In fact, we will carry out an inductive step, which will require estimates on the first Dirichlet eigenvalue of $(n-k)$-dimensional cross-sections of $\Omega$ for $0\leq k \leq n-1$. To write down the eigenvalue bounds that we will establish, we first introduce the following notation:  Given $i$, with $1\leq i \leq n-k-1$, and a point $x\in\R^{n-k}$, we write
\begin{align*}
x = (x_1,x_2,\ldots,x_{n-k}) = (X_i,X_{n-k-i}') \in\R^{n-k},
\end{align*}
with $X_i\in \R^i$, $X_{n-k-i}'\in\R^{n-k-i}$. Now let $W$ be a $(n-k)$-dimensional convex domain. For each $Y_{i}\in\R^{i}$, we denote the $(n-k-i)$-dimensional cross-sections of $W$ by 
\begin{align*}
W(Y_{i}) = \left\{x=(X_i,X_{n-k-i}')\in W: X_i = Y_i\right\}.
\end{align*}
For us, $W$ will either be the original convex domain $\Omega$ (with $k=0$) or a $(n-k)$-dimensional cross-section of $\Omega$, for some $1\leq k \leq n-1$.
\begin{defn} \label{def:W-eigenvalue}
For a $(n-k)$-dimensional convex domain $W$, let $\lambda(W)$ be its first Dirichlet eigenvalue. For $i$, with $1\leq i \leq n-k-1$, and $Y_i\in\mathbb{R}^i$, let $\mu(Y_i;W)$ be the first Dirichlet eigenvalue of $W(Y_i)$, and define $\mu_i^*(W)$ by
\begin{align} \label{eqn:muj}
\mu_i^*(W) = \min_{Y_i}\mu(Y_i;W).
\end{align}
We also formally define $\mu_{n-k}^*(W) = 0$, and then for $1\leq i \leq n-k$ set
\begin{align*}
\delta_i(W) = \lambda(W) - \mu_i^*(W).
\end{align*}
\end{defn}
We can obtain gradient bounds on the first Dirichlet eigenfunction of $W$ in terms of $\delta_i(W)$ via the following proposition.
\begin{prop} \label{prop:derivative}
Let $u_W(x)$ be the first Dirichlet eigenfunction of $W$. Then, for each $1\leq i \leq n-k$, with $\delta_i(W)$ as in Definition \ref{def:W-eigenvalue}, the gradient bounds
\begin{align*}
\sum_{\ell = 1}^{i}\int_{W}\left|\pa_{x_\ell}u_W(x)\right|^2 \ud x \leq \delta_i(W)\int_{W}\left|u_W(x)\right|^2\ud x
\end{align*}
hold. In particular, $\delta_i(W)\geq0$ for all $i$.
\end{prop}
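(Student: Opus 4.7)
The plan is to combine the variational characterization of the first Dirichlet eigenvalue on the cross-sections with the energy identity $\int_W |\nabla u_W|^2 \, dx = \lambda(W)\int_W |u_W|^2 \, dx$. The point is that for fixed $Y_i \in \mathbb{R}^i$, the restriction $X' \mapsto u_W(Y_i, X')$ is a candidate function in $H_0^1(W(Y_i))$, because $\partial W(Y_i)$ (viewed as a subset of $\mathbb{R}^{n-k-i}$) consists precisely of points $X'$ with $(Y_i, X') \in \partial W$, on which $u_W$ vanishes. If we can bound the gradient energy in the $X'$ variables from below using $\mu_i^*(W)$, subtracting from the total energy will leave exactly the partial gradient in the $X_i$ variables bounded by $\delta_i(W)$.

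Concretely, I would first fix $Y_i$ and apply the variational principle on the cross-section $W(Y_i) \subset \mathbb{R}^{n-k-i}$ to get
\begin{align*}
\mu(Y_i; W) \int_{W(Y_i)} |u_W(Y_i, X')|^2 \, dX' \leq \int_{W(Y_i)} |\nabla_{X'} u_W(Y_i, X')|^2 \, dX',
\end{align*}
where $\nabla_{X'}$ is the gradient in the variables $x_{i+1}, \ldots, x_{n-k}$. Using $\mu(Y_i; W) \geq \mu_i^*(W)$ from Definition \ref{def:W-eigenvalue} and integrating over $Y_i \in \mathbb{R}^i$ via Fubini, I obtain
\begin{align*}
\mu_i^*(W) \int_W |u_W|^2 \, dx \leq \sum_{\ell = i+1}^{n-k} \int_W |\partial_{x_\ell} u_W|^2 \, dx.
\end{align*}

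To close the argument, I would multiply the eigenvalue equation $(\Delta + \lambda(W))u_W = 0$ by $u_W$ and integrate by parts (using $u_W \in H_0^1(W)$) to get $\int_W |\nabla u_W|^2 \, dx = \lambda(W) \int_W |u_W|^2 \, dx$. Subtracting the cross-sectional bound from this energy identity yields
\begin{align*}
\sum_{\ell=1}^{i} \int_W |\partial_{x_\ell} u_W|^2 \, dx \leq \bigl(\lambda(W) - \mu_i^*(W)\bigr) \int_W |u_W|^2 \, dx = \delta_i(W) \int_W |u_W|^2 \, dx,
\end{align*}
which is the desired estimate. The case $i = n-k$ is trivially consistent since $\mu_{n-k}^*(W) = 0$ by convention and the inequality reduces to the energy identity itself. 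The nonnegativity $\delta_i(W) \geq 0$ follows immediately because the left-hand side is nonnegative and $u_W \not\equiv 0$.

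There is no real obstacle here; the only subtlety worth checking is the measure-theoretic issue that $W(Y_i)$ may be empty or lower-dimensional for $Y_i$ in a measure-zero set (outside the projection of $W$ onto $\mathbb{R}^i$), but since the integrand vanishes on such slices, this does not affect the Fubini step. The key conceptual content, which is that the eigenfunction's "transverse" Rayleigh quotient is bounded below by the minimal cross-sectional eigenvalue, is what gives the proposition its utility in the inductive eigenvalue estimates to come.
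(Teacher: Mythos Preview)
Your proof is correct and follows essentially the same approach as the paper: use the restriction of $u_W$ to each cross-section as a test function for the Rayleigh quotient on $W(Y_i)$, bound below by $\mu_i^*(W)$, integrate in $Y_i$, and subtract from the energy identity $\int_W|\nabla u_W|^2 = \lambda(W)\int_W|u_W|^2$. The paper's argument is the same in structure and detail, only slightly more terse.
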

\begin{proof}{Proposition \ref{prop:derivative}}
Since $u_W$ is a Dirichlet eigenfunction with eigenvalue $\la(W)$ we have
\begin{align} \label{eqn:deriv1}
\int_{W}\left|\nabla u_W(x)\right|^2 \ud x = \la(W) \int_{W}|u_W(x)|^2 \ud x. 
\end{align}
For $i=n-k$, we have $\delta_{n-k}(W) = \lambda(W)$ and then the estimate holds (with equality) immediately. We now fix $i$ with $1\leq i <n-k$. For each $X_i\in\R^i$ such that $W(X_i)$ is non-empty, the function $u_W(X_i,\cdot)$ is an admissible test function for the first eigenvalue on $W(X_i)$. Therefore,
\begin{align*}
\sum_{\ell=i+1}^{n-k}\int_{W(X_i)}\left|\pa_{x_\ell}u_W(X_i,X_{n-k-i}')\right|^2 \ud X'_{n-k-i} & \geq \mu(X_i;W)\int_{W(X_i)}\left|u_W(X_i,X'_{n-k-i})\right|^2 \ud X'_{n-k-i} \\
& \geq \mu_i^*(W) \int_{W(X_i)}\left|u_W(X_i,X_{n-k-i}')\right|^2 \ud X'_{n-k-i} .
\end{align*}
Since this holds for each $X_i$, we integrate in $X_i$ and then use it in \eqref{eqn:deriv1} to get
\begin{align*}
 \mu_i^*(W) \int_{W}\left|u_W(x)\right|^2 \ud x + \sum_{\ell=1}^{i}\int_{W}\left|\pa_{x_\ell}u_W(x)\right|^2 \ud x \leq  \lambda(W) \int_{W}|u_W(x)|^2 \ud x.
\end{align*}
The estimate in the proposition then follows from the definition of $\delta_i(W)$.
\end{proof}
As before, we set $u(x) = u_{\Omega}(x)$, $\lambda = \lambda(\Omega)$, and for ease of notation, we  write $\mu(Y_i;\Omega) = \mu(Y_i)$, $\mu_i^* = \mu_i^*(\Omega)$. Using Proposition \ref{prop:derivative}, in order to prove Theorem \ref{thm:Main1} it is sufficient to establish the following eigenvalue bounds.
\begin{prop} \label{prop:eigenvalue}
There exists a constant $C_n$ such that for all $j$, $1\leq j \leq n$,
\begin{align*}
\mu_j^* \leq \lambda \leq \mu_j^* + C_nN_j^{-2/3}.
\end{align*}
\end{prop}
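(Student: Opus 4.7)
The lower bound $\mu_j^*\leq\lambda$ is immediate from the inequality $\delta_j(\Omega)\geq 0$ proved in Proposition \ref{prop:derivative}, so the content is the upper bound, which we establish by induction on $j$.

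The base case $j=n$ is elementary: $\mu_n^*=0$ by convention, and since the John ellipsoid $K\subset\Omega$ has smallest semi-axis $N_n=1$, $\Omega$ contains a translate of the unit ball, so by domain monotonicity of the first Dirichlet eigenvalue $\lambda\leq \lambda(B_1)=C_n$, matching $C_nN_n^{-2/3}$.

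For the inductive step, let $Y_j^*\in\R^j$ realize $\mu(Y_j^*)=\mu_j^*$, and let $\psi$ be the normalized first Dirichlet eigenfunction of the $(n-j)$-dimensional cross-section $\Omega(Y_j^*)$. The geometric heart of the argument is the Brascamp--Lieb concavity theorem: $X_j\mapsto \mu(X_j)^{-1/2}$ is concave on the projection of $\Omega$ onto the first $j$ coordinates, with maximum at $Y_j^*$ of value $(\mu_j^*)^{-1/2}$. Combined with the facts that this projection has extent comparable to $N_j$ in the $x_j$-direction and that $\mu_j^*\leq C_n$ (because $\Omega(Y_j^*)$ still contains a unit ball inherited from the John ellipsoid of $\Omega$), one deduces the linear growth estimate
$$\mu(X_j)-\mu_j^*\lesssim \frac{|X_j-Y_j^*|}{N_j}.$$
One then transports $\psi$ to nearby cross-sections $\Omega(X_j)$ via an affine map $T_{X_j}$ supplied by convexity, and forms the trial function
$$v(X_j,X'_{n-j})=f(X_j)\,\psi\bigl(T_{X_j}^{-1}(X'_{n-j})\bigr),$$
with $f$ a cutoff in $X_j$ to be chosen.

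Expanding the Rayleigh quotient $\int|\nabla v|^2/\int v^2$ isolates a principal piece $\mu_j^*+E(f)$, where $E(f)$ is the energy of the one-parameter Schr\"odinger operator $-\partial^2+V(X_j)$ with linear potential $V(X_j)=\mu(X_j)-\mu_j^*$, plus error terms that arise from the $X_j$-variation of $T_{X_j}$. Taking $f$ to be a rescaled Airy ground state, concentrated on scale $N_j^{1/3}$ in the steepest direction of $V$, makes $E(f)\sim N_j^{-2/3}$, which is exactly the Airy scaling we want. The principal obstacle lies in the error terms: they contain factors of order $1/N_j$ per $X_j$-derivative of $T_{X_j}$ multiplied by tangential derivatives of $\psi$, and these must be absorbed into the Airy gain $N_j^{-2/3}$ rather than defeating it. This is where the induction is essential, as sharp $L^2$-bounds on the directional derivatives of $\psi$ follow from Proposition \ref{prop:derivative} applied to $\Omega(Y_j^*)$ together with the analogue of the present proposition in the lower-dimensional cross-section, which is available by induction. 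Closing the estimate then yields $\lambda\leq\mu_j^*+C_nN_j^{-2/3}$.
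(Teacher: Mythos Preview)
Your overall scheme matches the paper's: downward induction on $j$, a trial function built from the cross-sectional eigenfunction $\psi$ transported into nearby slices by convexity, localization at the $N_j^{1/3}$ scale, and---crucially---the inductive hypothesis fed through Proposition \ref{prop:derivative} to control the tangential derivatives $\pa_{x_\ell}\psi$ appearing in the error terms. You correctly identify that last step as the heart of the matter, and it is exactly what the paper isolates as Lemma \ref{lem:gradient}.

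Where your outline diverges is in the Brascamp--Lieb / Schr\"odinger framing, and here there is a genuine mismatch between what you invoke and what your construction actually produces. Once you commit to $v=f(X_j)\,\psi(T_{X_j}^{-1}X')$ with $T_{X_j}$ a dilation by factor $s(X_j)$, the $X'$-gradient contributes $\mu_j^*\,s(X_j)^{-2}$ to the Rayleigh quotient, \emph{not} $\mu(X_j)$. Thus the effective potential is $\mu_j^*(s^{-2}-1)$, determined purely by the scaling map you chose, and the concavity of $\mu(X_j)^{-1/2}$ never enters. The paper accordingly dispenses with Brascamp--Lieb entirely: it takes the convex hull of $\Omega(Y_k^*)$ with a spherical cap $S_k$ at distance $R_1\sim N_1$, giving $s=(R_1-r_k)/R_1$; places a plain cutoff $\chi(X_k)$ on a $k$-dimensional cube of side $\sim N_k^{1/3}$ at distance $\sim N_1N_k^{-2/3}$ inside the resulting cone (so that $|s^{-2}-1|\lesssim N_k^{-2/3}$ and $|\nabla\chi|\lesssim N_k^{-1/3}$ simultaneously); and reads off the bound by direct computation. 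No Airy profile is needed for an upper bound---a smooth bump already saturates the $N_k^{-2/3}$ scaling. Your Schr\"odinger picture is a valid heuristic for \emph{why} $N_j^{1/3}$ is the correct scale, but the identification $V=\mu(X_j)-\mu_j^*$ is not what your own trial function yields, so the Brascamp--Lieb paragraph does no work in your argument as written.

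Two smaller points: you write ``steepest direction of $V$'' where you presumably mean the direction of \emph{slowest} growth; and you should specify how $f$ is localized in all $j$ base variables, not just one, so that $v$ has full-dimensional support inside $\Omega$. The paper handles this by working in a $k$-dimensional cube inside the cone $\Gamma_k$.
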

From Proposition \ref{prop:derivative} we have $\la-\mu_j^*\geq0$, and so we only need to prove the upper bound. Since $\mu_n^* = 0$, and $\Omega$ has inner radius comparable to $N_n= 1$, the estimate in the proposition certainly holds for $j=n$. We will prove Proposition \ref{prop:eigenvalue} by induction on $j$ (starting with $j=n$ as the base case, and then decreasing $j$). To establish the inductive step we will use the variational formulation of the first Dirichlet eigenvalue. We will construct an appropriate test function involving the eigenfunctions corresponding to the minimal eigenvalue $\mu_j^*$ of the $j$-dimensional cross-sections of $\Omega$. To demonstrate the method let us first use it to prove the proposition in the two dimensional case. (In two dimensions, the estimate in Proposition \ref{prop:eigenvalue} is also contained in the work of Jerison \cite{Je1} and Grieser-Jerison \cite{GJ1}.)
\begin{proof}{Proposition \ref{prop:eigenvalue} in two dimensions}
In the two dimensional case, we just need to consider $j=1$. After a translation along the $x_1$-axis, we may assume that the minimal value $\mu_1^* = \mu_1(Y_1)$ is attained at $Y_1=0$. (Note that this point is at a point where the height of the domain $\Omega$ in the $x_2$-direction is largest.) Let $\psi(x_2)$ be the corresponding $L^2(\Omega(0))$-normalized first Dirichlet eigenfunction of the interval $\Omega(0)$, extended to be zero outside of $\Omega(0)$.  By the properties of the John ellipsoid of $\Omega$, we can find a point $x=(x_1,x_2)\in\Omega$ with $|x_1| = N_1$, and so without loss of generality, we assume that $x^* = (N_1,x_2^*)\in\Omega$ for some $x_2^*$. By translating in the $x_2$-direction we may assume that $x_2^*=0$, and after this translation there still exists a constant $C$ such that $|x_2|\leq C$ on the support of $\psi(x_2)$.
\\
\\
We now define a test function that we can use in the variational formulation of the first eigenvalue $\lambda$: We set $v(x_1,x_2)$ to be the function
\begin{align} \label{eqn:eigenvalue1}
v(x_1,x_2) = \chi(x_1) \psi\left(x_2 N_1/(N_1-x_1)\right).
\end{align}
Here $\chi(x_1)\geq0$ is a smooth cut-off function, such that
\begin{align*}
\chi(x_1) & = 1 \text{ for } \tfrac{1}{2}N_1^{1/3} \leq x_1\leq N_1^{1/3}, \\
\chi(x_1) & = 0 \text{ for } x_1\geq 2N_1^{1/3}, \text{ } x_1\leq \tfrac{1}{4}N_1^{1/3}.
\end{align*} 
The function $\chi(x_1)$ can in particular be chosen so that $|\chi'(x_1)|\leq CN_1^{-1/3}$. The domain $\Omega$ contains the interval $\Omega(0)$ and the point $x^* = (N_1,0)$, and so also contains the convex hull of these two sets. Therefore, for each $x_1\in[0,N_1]$, the cross-section $\Omega(x_1)$ contains the interval $\tfrac{N_1-x_1}{N_1}\Omega(0)$.  
In particular, this ensures that $v(x_1,x_2)$ is equal to zero on the complement of $\Omega$, and we can use it in the variational formulation of the first eigenvalue $\la$. That is,
\begin{align} \label{eqn:eigenvalue2}
\la \leq \frac{\int_{\Omega}\left|\nabla v(x)\right|^2 \ud x}{\int_{\Omega}|v(x)|^2 \ud x}.
\end{align}
We can write the right hand side of \eqref{eqn:eigenvalue2} as
\begin{align*}
 \frac{\int_{\Omega}\chi(x_1)^2\frac{N_1^2}{(N_1-x_1)^2}\left|\psi'\left(x_2 N_1/(N_1-x_1)\right)\right|^2 \ud x}{\int_{\Omega}\chi(x_1)^2\left|\psi\left(x_2 N_1/(N_1-x_1)\right)\right|^2 \ud x} +  \frac{\int_{\Omega}\left|\pa_{x_1}v(x)\right|^2 \ud x}{\int_{\Omega}|v(x)|^2 \ud x},
\end{align*}
and on the support of $\chi(x_1)$ we have
\begin{align*}
\left|\frac{N_1}{N_1-x_1}-1 \right| \leq CN_1^{-2/3}.
\end{align*}
Therefore, since $\psi(x_2)$ is an eigenfunction on $\Omega(0)$ with eigenvalue $\mu_1^*$, we have
\begin{align} \label{eqn:eigenvalue3}
\la \leq \mu_1^* + CN_1^{-2/3} +   \frac{\int_{\Omega}\left|\pa_{x_1}v(x)\right|^2 \ud x}{\int_{\Omega}|v(x)|^2 \ud x}.
\end{align}
The $x_1$-derivative of $v$ is given by
\begin{align*}
\pa_{x_1}v(x_1,x_2) = \chi'(x_1)\psi\left(x_2 N_1/(N_1-x_1)\right) - \chi(x_1)\frac{x_2N_1}{(N_1-x_1)^2}\psi'\left(x_2 N_1/(N_1-x_1)\right).
\end{align*}
We have $|\chi'(x_1)| \leq CN_1^{-1/3}$, $|\psi'\left(x_2 N_1/(N_1-x_1)\right)|\leq C$, and $|x_2|\leq C$ on the support of $\psi$. Combining this with the estimate $N_1/(N_1-x_1)^2 \leq CN_1^{-1}$ on the support of $\chi(x_1)$, from \eqref{eqn:eigenvalue3} we obtain
\begin{align*}
\la \leq \mu_1^* + CN_1^{-2/3},
\end{align*}
as required. 
\end{proof}
We now prove the general case.
\begin{proof}{Proposition \ref{prop:eigenvalue}}
We first recall that the estimate in the proposition holds for $j=n$, and that the lower bound holds for all $j$. We will prove the upper bound by induction on $j$, using $j=n$ as the base case. Our inductive hypothesis is that there exists constants $C_j$ such that
\begin{align} \label{eqn:induction}
 \lambda \leq \mu_j^* + C_j N_j^{-2/3}
\end{align}
for $k+1 \leq j \leq n$, and we will prove that there exists a constant $C_k$ such that \eqref{eqn:induction} holds for $j=k$. Analogously to the two dimensional case, we will prove this estimate by using an appropriate test function in the variational formulation for $\lambda$. The minimal value $\mu_k^*$ is given by $\mu(Y_k)$ for some $Y_k\in\R^{k}$, and we let $\psi(X_{n-k}')$ be the $L^2(\Omega(Y_k))$-normalized first Dirichlet eigenfunction of the $(n-k)$-dimensional cross-section $\Omega(Y_k)$, and extended to be zero outside $\Omega(Y_k)$. (We recall that in our notation $X_{n-k}'= (x_{k+1},x_{k+2},\ldots,x_n)$.) Our test function will involve this eigenfunction, and we first use Proposition \ref{prop:derivative} to establish bounds on the components of the gradient of $\psi(X_{n-k}')$, under the inductive hypothesis.
\begin{lem} \label{lem:gradient}
Assuming that the estimate in \eqref{eqn:induction} holds for $j$ satisfying $k+1 \leq j \leq n$, there exists a constant $C$ $($depending on the constants $C_j$$)$ so that for each such $j$ in this range, 
\begin{align*}
\int_{\Omega(Y_k)}\left|\pa_{x_j}\psi(X_{n-k}')\right|^2 \ud X_{n-k}' \leq CN_{j}^{-2/3}\int_{\Omega(Y_k)}\left|\psi(X_{n-k}')\right|^2 \ud X_{n-k}'  = CN_j^{-2/3}.
\end{align*}
\end{lem}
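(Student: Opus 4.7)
The plan is to apply Proposition \ref{prop:derivative} directly to the cross-sectional domain $W = \Omega(Y_k)$, which is an $(n-k)$-dimensional convex domain with first Dirichlet eigenfunction $\psi$ and first Dirichlet eigenvalue $\lambda(W) = \mu(Y_k) = \mu_k^{*}$. The coordinates on $W$ are $(x_{k+1},\ldots,x_n) = X_{n-k}'$, so for fixed $j$ with $k+1 \leq j \leq n$, I would invoke Proposition \ref{prop:derivative} with index $i = j-k$ in the role played by $i$ there. This yields
\begin{align*}
\sum_{\ell=k+1}^{j}\int_{\Omega(Y_k)}\left|\pa_{x_\ell}\psi(X_{n-k}')\right|^2 \ud X_{n-k}' \leq \delta_{j-k}(\Omega(Y_k))\int_{\Omega(Y_k)}|\psi|^2\, \ud X_{n-k}',
\end{align*}
so it remains to estimate $\delta_{j-k}(\Omega(Y_k)) = \mu_k^{*} - \mu_{j-k}^{*}(\Omega(Y_k))$.

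The key structural observation is that an $(n-j)$-dimensional sub-cross-section of $\Omega(Y_k)$ is itself an $(n-j)$-dimensional cross-section of $\Omega$: fixing the first $j-k$ coordinates of $\Omega(Y_k)$ to some $Z\in\R^{j-k}$ produces exactly the slice of $\Omega$ at $(Y_k,Z)\in\R^{j}$. Taking the minimum over $Z$ therefore ranges only over a subfamily of all $(n-j)$-dimensional cross-sections of $\Omega$, giving
\begin{align*}
\mu_{j-k}^{*}(\Omega(Y_k)) \geq \mu_j^{*}(\Omega) = \mu_j^{*}.
\end{align*}
Combining this with the trivial bound $\mu_k^{*}\leq \lambda$ (from Proposition \ref{prop:derivative} applied to $\Omega$ with $i=k$) and the inductive hypothesis \eqref{eqn:induction} at level $j$, I get
\begin{align*}
\delta_{j-k}(\Omega(Y_k)) \leq \mu_k^{*} - \mu_j^{*} \leq \lambda - \mu_j^{*} \leq C_j N_j^{-2/3}.
\end{align*}

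Isolating the $\ell = j$ term in the gradient sum and using that $\psi$ is $L^2(\Omega(Y_k))$-normalized then yields the conclusion of the lemma, with the constant $C$ being (at most) $\max_{k+1\leq j\leq n} C_j$, as stated. I don't foresee a substantive obstacle here: the entire proof is a one-line application of Proposition \ref{prop:derivative} to the cross-section combined with the easy monotonicity $\mu_{j-k}^{*}(\Omega(Y_k))\geq \mu_j^{*}$. The only care needed is bookkeeping of the coordinate labels (since $W = \Omega(Y_k)$ uses $x_{k+1},\ldots,x_n$ as its intrinsic coordinates, the index $i = j-k$ in Proposition \ref{prop:derivative} produces precisely the derivatives $\partial_{x_{k+1}},\ldots,\partial_{x_j}$) and the verification that $\mu_k^{*}\leq \lambda$ is used in place of the inductive hypothesis at level $k$, which is not yet available.
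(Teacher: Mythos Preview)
Your proof is correct and follows essentially the same approach as the paper: both apply Proposition~\ref{prop:derivative} to $W=\Omega(Y_k)$, use the monotonicity $\mu_{j-k}^*(\Omega(Y_k))\geq \mu_j^*$ (the paper writes this as $\mu_j^*\leq \mu_{k,j}^*$ with $\mu_{k,j}^*$ denoting your $\mu_{j-k}^*(\Omega(Y_k))$), and combine $\mu_k^*\leq\lambda$ with the inductive hypothesis \eqref{eqn:induction} to bound $\delta_{j-k}(\Omega(Y_k))$. Your write-up is in fact slightly more explicit about the coordinate bookkeeping than the paper's, but the arguments are the same.
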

\begin{proof}{Lemma \ref{lem:gradient}}
The eigenfunction $\psi(X_{n-k}')$ on $\Omega(Y_k)$ has eigenvalue $\mu_k^*$, and analogously to Definition \ref{def:W-eigenvalue}, for $k+1\leq j \leq n$, we define $\mu_{k,j}^*$ to be the minimum eigenvalue over all $(n-j)$-dimensional cross-sections of $\Omega(Y_k)$ in the $X_{n-j}'$ variables. 
Since $\Omega(Y_k)\subset \Omega$, by the definitions of the minima $\mu_{k,j}^*$ and $\mu_j^*$ we automatically have
\begin{align*}
\mu_j^* \leq \mu_{k,j}^*.
\end{align*}
Combining this with the inductive hypothesis in \eqref{eqn:induction}, for each $k+1\leq j \leq n$ we obtain
\begin{align} \label{eqn:induction1}
\mu_k^* \leq \lambda \leq \mu_j^* + C_jN_j^{-2/3} \leq \mu_{k,j}^* + C_jN_j^{-2/3}.
\end{align}
Therefore, setting $W$ to be the $(n-k)$-dimensional convex domain $\Omega(Y_k)$, and using the notation from Definition \ref{def:W-eigenvalue} we have
\begin{align*}
\delta_i(W) = \lambda(W) - \mu_i^*(W) = \mu_k^* - \mu_{k,i+k}^* \leq C_{i+k}N_{i+k}^{-2/3}.
\end{align*} 
for $1 \leq i \leq n-k$. The gradient bounds in the statement of the lemma then immediately follow from Proposition \ref{prop:derivative}, using that $\psi(X_{n-k}')$ is $L^2(\Omega(Y_k))$-normalized. 
\end{proof}
We now define the test function that we will use to bound $\lambda$. We first translate the domain $\Omega$ in the $X_k$-variables so that the point $Y_k$ with $\mu(Y_k) = \mu_k^*$ is at the origin, which we denote by $0_k$. Then, using the above notation, $\psi(X_{n-k}')$ is the first Dirichlet eigenfunction of the $(n-k)$-dimensional cross-section $\Omega(0_k)$. By the properties of the John ellipsoid of $\Omega$, there exists a $k$-dimensional parallelepiped $P$ of dimensions comparable to  $N_1\times N_2\times\cdots \times N_k$ contained in the intersection of $\Omega$ with a $k$-dimensional plane $\{X_{n-k}' = \text{ constant}\}$. By translating $\Omega$ in the $X_{n-k}'$ variables we will assume that this $k$-dimensional plane is $\{X_{n-k}' = 0'_{n-k}\}$. Note that after this translation, there exists a constant $C$ such that
\begin{align} \label{eqn:projection}
\text{proj}_{j}(\Omega(0_k)) \subset \{ |x_{j}| \leq CN_j\}
\end{align}
for $k+1 \leq j \leq n$. Here $\text{proj}_{j}(\Omega(0_k))$ is the projection of $\Omega(0_k)$ onto the $x_j$-axis. Since $\Omega$ contains the above parallelepiped $P$, there exists a $(k-1)$-dimensional sphere contained in $\{X_{n-k}' = 0'_{n-k}\}$, centred at the origin $0_k$ in the $X_{k}$-variables, of radius $R_1$ with $R_1\sim N_1$, and with the following property: There exists a direction $\textbf{e}$ in the $X_k$-variables and number $\theta_k$, with $\theta_k\sim N_k/N_1$, such that the subset, $S_k$, of the sphere making an angle at most $\theta_k$ with $\textbf{e}$, is contained within $\Omega$. (Note that in the case of $k=1$, the sphere is $0$-dimensional, and the above reduces to the existence of a point in $\Omega$ at a distance comparable to $N_1$ from the $(n-1)$-dimensional cross-section $\Omega(0_1)$.)
\\
\\
We now let $\Gamma_k$ be the $k$-dimensional cone in the $X_k$-variables generated by the set $S_{k}$, with vertex at the origin $0_k$. This cone $\Gamma_k$ contains a $k$-dimensional cube of side length comparable to $N_k^{1/3}$, at a distance comparable to $N_1N_k^{-2/3}$ from the origin. We can therefore define a cut-off function $\chi(X_k)$ adapted to this cube (so that $\chi(X_k) = 1$ in the middle half of the cube, and $0$ outside the cube), with $|\nabla \chi(X_k)| \leq CN_k^{-1/3}$. Our test function is then
\begin{align} \label{eqn:test1}
w(x) = w(X_k,X_{n-k}') = \chi(X_k)\psi\left(X_{n-k}'R_1/(R_1 - r_k)\right).
\end{align}
Here $r_k = (x_1^2+x_2^2+\cdots+x_k^2)^{1/2}$ is the distance to the origin $0_k$ in the $X_k$-plane. Since $\Omega$ is convex, it contains the convex hull of the $(n-k)$-dimensional cross-section $\Omega(0_k)$ and the set $S_k$. Therefore, given $X_k\in S_k$, $s\in[0,1]$, the $(n-k)$-dimensional cross-section of $\Omega$ at $sX_k\in\Gamma_k$ contains the set 
\begin{align*}
\left(\frac{R_1-|sX_k|}{R_1}\right)\Omega(0_k)=(1-s)\Omega(0_k).
\end{align*}
Thus, the test function $w(x)$ vanishes outside of $\Omega$, and so can be used to obtain an upper bound on $\lambda$. We therefore have
\begin{align} \label{eqn:test2}
\la \leq \frac{\int_{\Omega}\left|\nabla_{X_k}w(x)\right|^2\ud x}{\int_{\Omega}\left|w(x)\right|^2\ud x} +  \frac{\int_{\Omega}\left|\nabla_{X_{n-k}'}w(x)\right|^2\ud x}{\int_{\Omega}\left|w(x)\right|^2\ud x},
\end{align}
and we deal with each term separately. We can write the second term in \eqref{eqn:test2} as
\begin{align} \label{eqn:test3}
\frac{\int_{\Omega}\frac{R_1^2}{(R_1-r_k)^2}|\chi(X_k)|^2\left|\left(\nabla_{X_{n-k}'}\psi\right)\left(X_{n-k}'R_1/(R_1 - r_k)\right)\right|^2\ud x}{\int_{\Omega}\left|\chi(X_k)\right|^2\left|\psi\left(X_{n-k}'R_1/(R_1 - r_k)\right)\right|^2\ud x},
\end{align}
and on the support of $\chi(X_k)$ we have
\begin{align} \label{eqn:test4}
\left|\frac{R_1}{R_1-r_k} -1\right| \leq CN_k^{-2/3}. 
\end{align}
Therefore, since $\psi(X_{n-k}')$ has eigenvalue $\mu_k^*$ on $\Omega(0)$, we can bound the quantity in \eqref{eqn:test3} by $\mu_k^* + CN_k^{-2/3}$. We now turn to the first term in \eqref{eqn:test2}. We can bound the magnitude of $\nabla_{X_k}w(x)$ by
\begin{align} \label{eqn:test5}
 \left|\left(\nabla\chi(X_k)\right) \psi\left(X_{n-k}'R_1/(R_1 - r_k)\right)\right| + \left|\chi(X_k)\frac{R_1}{(R_1-r_k)^2}X_{n-k}'\cdot \left(\nabla_{X_{n-k}'}\psi\right)\left(X_{n-k}'R_1/(R_1 - r_k)\right)\right|. 
\end{align}
Since $|\nabla\chi(X_k)|\leq CN_k^{-1/3}$, the contribution from the first term in \eqref{eqn:test5} leads to a contribution of size $CN_k^{-2/3}$ to \eqref{eqn:test2}. Using $|R_1-r_k|\geq cN_1$, together with the lengths of the projections of $\Omega(0)$ onto each axis from \eqref{eqn:projection}, we can bound the second term in \eqref{eqn:test5} by
\begin{align*}
CN_1^{-1}\sum_{j=k+1}^{n}N_j\left|\left(\pa_{x_{j}}\psi\right)\left(X_{n-k}'R_1/(R_1 - r_k)\right)\right|.
\end{align*}
Therefore, by Lemma \ref{lem:gradient}, we can bound the contribution to \eqref{eqn:test2} from the second term in \eqref{eqn:test5} by
\begin{align*}
CN_1^{-2}\sum_{j=k+1}^{n}N_j^{2}N_j^{-2/3} = CN_1^{-2}\sum_{j=k+1}^{n}N_j^{4/3}.
\end{align*}
Since $N_1\geq N_2\geq\cdots \geq N_n$, this can be bounded by $CN_1^{-2}N_{k+1}^{4/3} \leq CN_k^{-2/3}$. Putting everything together, we obtain
\begin{align*}
\la \leq \mu_k^*+ CN_k^{-2/3}.
\end{align*}
This is precisely the inductive step, and so completes the proof of the proposition. 
\end{proof}
\begin{rem}
Denoting $\la_m$ to be the $m$-th Dirichlet eigenvalue of $\Omega$, a small modification of the proof of Proposition \ref{prop:eigenvalue} ensures the existence of a constant $C_{m,n}$ such that
\begin{align} \label{eqn:rem-m}
\mu_j^* \leq \la_m \leq \mu_j^* + C_{m,n}N_j^{-2/3}.
\end{align}
The only change is that in place of $\chi(X_k)$, we require $m$ functions $\chi_{m}(X_k)$, with $|\nabla \chi_m(X_k)| \leq C_mN_k^{-1/3}$, chosen such that
\begin{align*}
w_m(x) = \chi_m(X_k)\psi(X_{n-k}'R_1/(R_1-r_k))
\end{align*}
are orthogonal. The estimate in \eqref{eqn:rem-m} in particular ensures that if $u_m$ is the corresponding $m$-th eigenfunction, then it also satisfies the derivative estimates in Theorem \ref{thm:Main1} with a constant $C_{m,n}$.
\end{rem}

\section{A lower bound on the $L^2(\Omega)$-norm of the eigenfunction} \label{sec:L2}

In this section we prove Theorem \ref{thm:Main} by combining the derivative estimates from Theorem \ref{thm:Main1} with the log concavity of the eigenfunction. Since $u$ is log concave, the superlevel set $\Omega_{1/2}$ is a convex subset of $\Omega$. In particular, we can associate a John ellipsoid $E_{1/2}$ to $\Omega_{1/2}$. Let $v_j$ be the unit directions along which the axes of $E_{1/2}$ lie, and let $M_{j}$ be the corresponding lengths of the axes. We also let $e_j$ be the unit directions along the cartesian coordinate axes. The first step is to show that $\Omega_{1/2}$ determines the $L^2(\Omega)$-norm of $u$.
\begin{lem} \label{lem:log}
There exist constants $C_1$, $c_1>0$ such that
\begin{align*}
c_1\prod_{j=1}^{n}M_j  \leq \int_{\Omega}|u(x)|^2 \ud x \leq C_1\prod_{j=1}^{n}M_j. 
\end{align*}
\end{lem}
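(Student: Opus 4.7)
The lower bound is immediate: since $u \ge 1/2$ on $\Omega_{1/2}$, one has $\int_\Omega |u|^2 \ge \tfrac14 |\Omega_{1/2}|$, and the defining property of the John ellipsoid $E_{1/2}$ gives $|\Omega_{1/2}| \ge |E_{1/2}| = c_n \prod_{j=1}^n M_j$. So the real content is the upper bound, which is where log-concavity has to be used.

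For the upper bound, I would split
\begin{align*}
\int_\Omega |u|^2 \ud x = \int_{\Omega_{1/2}} |u|^2 \ud x + \int_0^{1/2} 2c\, |\{u>c\}|\, \ud c,
\end{align*}
using the layer-cake representation on the complement. Since $u\le 1$, the first integral is at most $|\Omega_{1/2}|$, and $|\Omega_{1/2}| \le n^n |E_{1/2}| = C_n\prod_j M_j$ by the scaling property of the John ellipsoid. So I only need a good bound on $|\{u>c\}|$ for $c\in(0,1/2)$.

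The key step is the following consequence of log-concavity. Let $x_0$ be the point where $u$ attains its maximum $1$. If $y\in\{u>c\}$ with $c = 2^{-m}$, then writing $z = \lambda y + (1-\lambda)x_0$ with $\lambda = 1/m$, log-concavity gives
\begin{align*}
u(z) \ge u(y)^{\lambda} u(x_0)^{1-\lambda} > c^{1/m} = \tfrac12,
\end{align*}
so $\frac1m\{u>2^{-m}\} + (1-\tfrac1m)x_0 \subset \Omega_{1/2}$, whence $|\{u>2^{-m}\}| \le m^n |\Omega_{1/2}|$. By monotonicity of $c\mapsto|\{u>c\}|$ this extends to $|\{u>c\}| \le C_n(1+\log(1/c))^n |\Omega_{1/2}|$ for all $c\in(0,1/2)$. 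Substituting back into the layer-cake integral, $\int_0^{1/2} 2c(1+\log(1/c))^n\,\ud c$ is a finite dimensional constant, so
\begin{align*}
\int_0^{1/2} 2c\,|\{u>c\}|\,\ud c \le C_n |\Omega_{1/2}| \le C_n'\prod_{j=1}^n M_j,
\end{align*}
which completes the upper bound. The main obstacle is exactly the control of the low superlevel sets $\{u>c\}$ in terms of $|\Omega_{1/2}|$, and the log-concavity of $u$ (combined with the existence of an interior maximum $x_0$) is what makes the Brunn--Minkowski style scaling argument above work.
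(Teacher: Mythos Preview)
Your argument is correct and follows essentially the same route as the paper: both use log-concavity to show that the superlevel set $\{u>2^{-m}\}$ is controlled by an $m$-fold dilation of $\Omega_{1/2}$ (the paper phrases this via projections onto the $v_j$-axes, you via the direct inclusion $\tfrac1m\{u>2^{-m}\}+(1-\tfrac1m)x_0\subset\Omega_{1/2}$), and then sum a convergent series in $m$. One small slip: your layer-cake identity is off by $\tfrac14|\Omega_{1/2}|$, but since you only need the $\leq$ direction this is harmless.
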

\begin{proof}{Lemma \ref{lem:log}}
The lower bound follows immediately from the definitions of $M_j$. To obtain the upper bound we use the log concavity of $u$: The projection of the superlevel set $\Omega_{1/2}$ onto each $v_j$ axis is comparable to $M_j$. The function $\log(u)$ is concave and attains a maximum of $0$ in $\Omega$. Therefore, the projection of the sets
\begin{align*}
\Omega_{2^{-m}} = \{x\in\Omega:u(x) \geq 2^{-m}\} = \{x\in\Omega:\left|\log(u(x))\right| \leq m\left|\log(1/2)\right|\}
\end{align*}
onto each $v_j$ axis is at most a constant multiplied by $m M_j$. Summing this estimate over $m$ gives the desired upper bound in the lemma.
\end{proof}
We now reorder the directions $v_j$ to ensure that $M_1\geq M_2\geq\cdots \geq M_n$, and to prove Theorem \ref{thm:Main} we will obtain a lower bound on each $M_j$. Since $\Omega$ has inner radius comparable to $1$, the point where $u$ attains its maximum is at a distance at least $c>0$ from the boundary (see Theorem 1 in \cite{RS} in two dimensions, and Theorem 1.6 in \cite{GM} in higher dimensions). Therefore, by interior elliptic estimates, $M_n$ is certainly comparable to $1$. Given $k$, with $1\leq k \leq n-1$, let $w_k$ be a unit direction in $\R^n$ which lies in the projection of $\R^n$ onto the first $k$ coordinates. That is, $w_k$ is a linear combination of $e_j$ for $1\leq j \leq k$. We then consider the cross-sections of $\Omega$
\begin{align*}
\Omega_{w_k}(t) = \{x\in\Omega: x\cdot w_k = t\},
\end{align*} 
which as $t$ varies give the $(n-1)$-dimensional slices of $\Omega$ which are orthogonal to $w_k$.  For each $t$, we can consider the $L^2(\Omega_{w_k}(t))$-norm squared of $u$,
\begin{align} \label{eqn:slice1a}
\int_{\Omega_{w_k}(t)}|u(x)|^2 \ud \sigma_{n-1}(x;w_k),
\end{align}
where $\ud \sigma_{n-1}(x;w_k)$ is the flat $(n-1)$-dimensional surface measure on $\Omega_{w_k}(t)$. Suppose that the expression in \eqref{eqn:slice1a} is maximized when $t=t^*$, and set
\begin{align*}
B_k^* = \int_{\Omega_{w_k}(t^*)}|u(x)|^2 \ud \sigma_{n-1}(x;w_k).
\end{align*}
We can now use Theorem \ref{thm:Main1} to obtain a lower bound on the $L^2$-norm of $u$ in terms of $B_k^*$.
\begin{lem} \label{lem:slice}
There exists a constant $c_2>0$ such that for each $k$, $1\leq k \leq n-1$, and any such direction $w_k$,
\begin{align*}
\int_{\Omega}|u(x)|^2 \ud x  \geq c_2B_k^* N_k^{1/3}. 
\end{align*}
\end{lem}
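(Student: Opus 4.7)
The plan is to use the derivative bound from Theorem \ref{thm:Main1} to control how fast the slice integral $f(t) := \int_{\Omega_{w_k}(t)} |u(x)|^2 \, \ud\sigma_{n-1}(x;w_k)$ can drop off from its maximum value $B_k^* = f(t^*)$. Intuitively, since $u$ varies slowly in the direction $w_k$ at scale $N_k^{1/3}$, $f(t)$ must stay comparable to $B_k^*$ on an interval of this length, and integrating in $t$ recovers the bulk of $\norm{u}_{L^2(\Omega)}^2$.

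The first step is to convert the axis-aligned derivative estimates of Theorem \ref{thm:Main1} into a bound in the direction $w_k$. Writing $w_k = \sum_{j=1}^k a_j e_j$ with $\sum_j a_j^2 = 1$, the Cauchy-Schwarz inequality gives
\begin{equation*}
\norm{\pa_{w_k} u}_{L^2(\Omega)}^2 \leq \sum_{j=1}^{k} \norm{\pa_{x_j} u}_{L^2(\Omega)}^2.
\end{equation*}
Applying Theorem \ref{thm:Main1} (with $N_n=1$) and using $N_j \geq N_k$ for $j \leq k$, each summand is at most $CN_k^{-2/3}\norm{u}_{L^2(\Omega)}^2$, hence $\norm{\pa_{w_k} u}_{L^2(\Omega)} \leq C N_k^{-1/3}\norm{u}_{L^2(\Omega)}$.

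Next, since $u=0$ on $\pa\Omega$, the moving boundary of the slice $\Omega_{w_k}(t)$ contributes nothing to $\tfrac{d}{dt}f(t)$, so differentiating under the integral yields $f'(t) = 2\int_{\Omega_{w_k}(t)} u\,\pa_{w_k} u \, \ud\sigma_{n-1}$. Setting $g(t) := \int_{\Omega_{w_k}(t)} |\pa_{w_k} u|^2 \, \ud\sigma_{n-1}$, applying Cauchy-Schwarz on the slice gives $|(\sqrt{f})'(t)| \leq \sqrt{g(t)}$. Integrating from $t^*$, using Cauchy-Schwarz in $t$, and noting that $\int g(s)\,\ud s = \norm{\pa_{w_k} u}_{L^2(\Omega)}^2$ by Fubini, I obtain for any $h>0$ and $|t-t^*|\leq h$
\begin{equation*}
\bigl|\sqrt{f(t)} - \sqrt{B_k^*}\,\bigr| \leq \sqrt{h}\, \norm{\pa_{w_k} u}_{L^2(\Omega)}.
\end{equation*}

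Finally, choosing $h$ so that the right-hand side equals $\tfrac{1}{2}\sqrt{B_k^*}$ forces $f(t) \geq \tfrac{1}{4}B_k^*$ on an interval of length $2h \sim B_k^*/\norm{\pa_{w_k}u}_{L^2(\Omega)}^2$ centred at $t^*$, so
\begin{equation*}
\norm{u}_{L^2(\Omega)}^2 = \int f(t)\,\ud t \geq c\,\frac{(B_k^*)^2}{\norm{\pa_{w_k} u}_{L^2(\Omega)}^2} \geq c'\,\frac{(B_k^*)^2\, N_k^{2/3}}{\norm{u}_{L^2(\Omega)}^2},
\end{equation*}
which rearranges to the claimed estimate $\norm{u}_{L^2(\Omega)}^2 \geq c_2 B_k^* N_k^{1/3}$. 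The only real technical subtlety, rather than an obstacle, is justifying the slice differentiation cleanly: the boundary of $\Omega_{w_k}(t)$ lies in $\pa\Omega$, so the Dirichlet condition kills the would-be boundary terms, but a rigorous statement can be obtained either via the coarea formula applied to $u^2$ or by an approximation of $\Omega$ by smooth strongly convex domains.
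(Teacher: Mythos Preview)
Your proof is correct and follows essentially the same route as the paper's: bound $\norm{\pa_{w_k}u}_{L^2(\Omega)}$ via Theorem~\ref{thm:Main1}, show the slice integral $f(t)$ remains at least $\tfrac{1}{4}B_k^*$ on an interval of length $\sim B_k^*/\norm{\pa_{w_k}u}_{L^2}^2$, integrate in $t$, and rearrange. The only cosmetic difference is that the paper applies the fundamental theorem of calculus pointwise to $u$ along lines parallel to $w_k$ (extending $u$ by zero) and then integrates over the slice, whereas you differentiate $f(t)$ directly and control $(\sqrt{f})'$; the two are equivalent.
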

\begin{proof}{Lemma \ref{lem:slice}}
Fix a point $x_{t^*}\in \Omega_{w_k}(t^*)$ and for each $s$ choose $x_s$ such that $(x_{t^*}-x_s)\cdot w_k = t^*-s$ and $|x_{t^*}-x_s| = |t^*-s|$. Then, extending $u$ by zero outside $\Omega$, for any $t$ we can write
\begin{align*}
u(x_t) = u(x_{t^*}) + \int_{t^*}^{t} \pa_{w_k}u(x_s) \ud s,
\end{align*}
where $\pa_{w_k}u$ is the directional derivative $w_k\cdot \nabla u$. This implies that
\begin{align*}
|u(x_t)|^2 & \geq \tfrac{1}{2}|u(x_{t^*})|^2 - \left(\int_{t^*}^{t}\pa_{w_k}u(x_s)\ud s \right)^2 \\
&\geq \tfrac{1}{2}|u(x_{t^*})|^2 - |t-t^*|\left|\int_{t^*}^{t}\left| \pa_{w_k}u(x_s)\right|^2 \ud s \right|.
\end{align*}
We now integrate over the $(n-1)$ variables orthogonal to $w_k$. Since $w_k$ lies in the projection of $\R^n$ onto the first $k$ coordinates, we can use Theorem \ref{thm:Main1} with $j\leq k$ to bound $\pa_{w_k}u$. We therefore have
\begin{align} \label{eqn:slice1}
\int_{\Omega_{w_k}(t)} |u(x)|^2 \ud \sigma_{n-1}(x;w_k) \geq \tfrac{1}{2}\int_{\Omega_{w_k}(t^*)}|u(x)|^2 \ud \sigma_{n-1}(x;w_k) -C|t-t^*|N_k^{-2/3} \int_{\Omega}|u(x)|^2 \ud x.
\end{align}
In particular, for
\begin{align*}
|t-t^*| \leq \tfrac{1}{4}C^{-1}N_k^{2/3} \left(\int_{\Omega}|u(x)|^2 \ud x\right)^{-1} \int_{\Omega_{w_k}(t^*)}|u(x)|^2 \ud \sigma_{n-1}(x;w_k),
\end{align*}
the estimate in \eqref{eqn:slice1} implies that
\begin{align*}
\int_{\Omega_{w_k}(t)} |u(x)|^2 \ud \sigma_{n-1}(x;w_k) \geq \tfrac{1}{4}\int_{\Omega_{w_k}(t^*)}|u(x)|^2 \ud \sigma_{n-1}(x;w_k)  = \tfrac{1}{4}B_k^*. 
\end{align*}
Therefore,
\begin{align*}
\int_{\Omega}|u(x)|^2 \ud x \geq  \tfrac{1}{16}C^{-1}N_k^{2/3} \left(\int_{\Omega}|u(x)|^2 \ud x\right)^{-1} \left(B_k^*\right)^2,
\end{align*}
and rearranging implies the estimate in the lemma. 
\end{proof}
The final step is to show that for each $k$, $1\leq k \leq n-1$, we can choose such a unit direction $w_k$ lying in $k$-dimensional space spanned by $e_1,e_2,\ldots,e_k$, such that
\begin{align} \label{eqn:Bk}
B_k^* \geq c_3 \prod_{j=1, j\neq k}^{n}M_j. 
\end{align} 
Inserting this in Lemma \ref{lem:slice} and using the upper bound in Lemma \ref{lem:log} implies that $M_k$ is bounded from below by a multiple of $N_k^{1/3}$. The lower bound in Lemma \ref{lem:log} then gives the estimate in Theorem \ref{thm:Main}. 
\\
\\
We are left to prove \eqref{eqn:Bk}, and we first consider $k=1$: Consider the $(n-1)$-dimensional cross-sections of $\Omega_{1/2}$ perpendicular to $w_1 = e_1$. Since $\Omega_{1/2}$ has volume comparable to $\prod_{j=1}^{n}M_j$ and diameter comparable to $M_1$, the volume of one of these cross-sections must be at least comparable to $\prod_{j=2}^{n}M_j$. In particular, this ensures that $B_1^*\geq \tfrac{1}{4}c\prod_{j=2}^{n}M_j$. 
\\
\\
For $k\geq2$, we first choose a unit direction $w_k$ in the intersection of the $k$-dimensional plane spanned by $e_1,e_2,\ldots,e_k$ and the $(n-k+1)$-dimensional plane spanned by $v_k,v_{k+1},\ldots,v_n$. Taking the $(n-1)$-dimensional cross-sections of $\Omega_{1/2}$ perpendicular to $w_k$, the volume of one of these cross-sections must be at least $c\prod_{j=1, j\neq k}^{n}M_j$. To see this, we first note that there is a $(n-k+1)$-dimensional cross-section of $\Omega_{1/2}$ which is perpendicular to $v_1,v_2,\ldots,v_{k-1}$ and contains a $(n-k+1)$-dimensional ellipsoid $E$ with axes of lengths $M_k,M_{k+1},\ldots,M_n$. In particular, the volume of one of the $(n-k)$-dimensional cross-sections of $E$ which is perpendicular to $w_k$ must be at least $c\prod_{j=k+1}^{n}M_j$. But $w_k$ is also perpendicular to $v_1,v_2,\ldots,v_{k-1}$, and the projection of $\Omega_{1/2}$ onto the $v_j$-direction is comparable to $M_j$. Therefore, there exists a $(n-k)+(k-1) = (n-1)$-dimensional cross-section of $\Omega_{1/2}$ perpendicular to  $w_k$ of volume at least $c\left(\prod_{j=1}^{k-1}M_j\right)\left(\prod_{j=k+1}^{n}M_j\right)$. This ensures that $B_k^*\geq \tfrac{1}{4}c\prod_{j=1, j\neq k}^{n}M_j$, and \eqref{eqn:Bk} holds.

\section{The two-dimensional case} \label{sec:2d}

Theorem \ref{thm:Main} provides a lower bound on the $L^2(\Omega)$-norm of $u$. In two dimensions, Jerison and Grieser have given a precise characterization of the shape of $u$ in terms of the geometry of $\Omega$. To state this, we first rotate so that the projection of the planar domain onto the $x_2$-axis is the smallest and dilate so that this projection is of length $1$. Then, we can write $\Omega$ as
\begin{align*}
\Omega = \{(x_1,x_2)\in \R^2: a\leq x_1\leq b, f_1(x_1)\leq x_2 \leq f_2(x_1)\}.
\end{align*}
Here $b-a$ is comparable to $N_1$, $f_1$, $f_2$ are convex, concave functions respectively, and $0\leq h(x) = f_2(x_1)-f_1(x_1)$ is a concave function, attaining a maximum of $1$. 
\begin{defn}[\cite{Je1}] \label{def:L}
Define $L$ to be the largest value such that $1-L^{-2} \leq h(x_1) \leq 1$ on an interval $I$ of length $L$. 
\end{defn}
Since $h(x_1)$ is concave, the value of $L$ satisfies 
$cN_1^{1/3} \leq L \leq CN_1$, 
and $L\sim N_1$, $L\sim N_1^{1/3}$ is attained when $\Omega$ is a rectangle, circular sector respectively. Any intermediate value of $L$ can be obtained by, for example, forming the trapezoid of a rectangle of diameter $L$ attached to a right angled triangle. In \cite{Je1}, \cite{GJ2}, \cite{GJ1}, Grieser and Jerison obtain estimates on the first and second Dirichlet eigenfunction in terms of this length scale $L$. Their approach is to perform an approximate separation of variables in $\Omega$. Since the cross-section of $\Omega$ at $x_1$ has eigenvalue $\tfrac{\pi^2}{h(x_1)^2}$, a separation of variables leads to the ordinary differential operator
\begin{align*}
\mathcal{L} = -\frac{d^2}{dx_1^2} + \frac{\pi^2}{h(x_1)^2}
\end{align*}
on the interval $[a,b]$. Grieser and Jerison approximate $\la$ and $u$ in terms of the first eigenvalue and eigenfunction of $\mathcal{L}$, and the approximation becomes stronger as the diameter of $\Omega$ increases. As a consequence of their work, the following $L^2(\Omega)$ bound holds in this planar case.
\begin{thm}[Grieser-Jerison, \cite{GJ1}] \label{thm:L2-2d}
There exists an absolute constant $C$ such that the superlevel set $\{u>\tfrac{1}{2}\max_{\Omega}u\}$ has diameter bounded between $C^{-1}L$ and $CL$, and 
\begin{align*}
C^{-1}L^{1/2}\norm{u}_{L^{\infty}(\Omega)}\leq \norm{u}_{L^2(\Omega)} \leq CL^{1/2}\norm{u}_{L^{\infty}(\Omega)}.
\end{align*}
\end{thm}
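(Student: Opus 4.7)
The strategy I would follow is the Grieser-Jerison approximate separation of variables. The plan is to introduce the one-dimensional operator $\mathcal{L}$ on $[a,b]$ with ground state $\phi(x_1)$ and eigenvalue $E$, and the cross-sectional ground state $\Psi(x_1,x_2) = \sqrt{2/h(x_1)}\sin\!\bigl(\pi(x_2-f_1(x_1))/h(x_1)\bigr)$ (which is $L^2$-normalized in $x_2$ at each fixed $x_1$). The candidate approximate eigenfunction is $v(x_1,x_2)=\phi(x_1)\Psi(x_1,x_2)$. A direct computation shows that $(\Delta+E)v$ contains only terms involving $f_1'$, $f_2'$, and their derivatives, so by concavity of $h$ these are small away from the endpoints. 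I would then invoke the quantitative comparison $\norm{u-v}_{L^2(\Omega)} \le \epsilon_L\norm{v}_{L^2(\Omega)}$ with $\epsilon_L\to 0$ as $L\to\infty$, together with $\lambda - E = O(L^{-2})$, both of which are what Grieser-Jerison establish in \cite{Je1,GJ1,GJ2}. Once this is in hand, the theorem reduces to analyzing $\phi$ on $[a,b]$.

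The second step is to show that $\phi$ concentrates on an interval of length comparable to $L$. For the upper bound $E - \pi^2 \le CL^{-2}$, use the trial function $\eta(x_1)\sin\!\bigl(\pi(x_1 - x_I)/L\bigr)$ supported on the interval $I$ of length $L$ from Definition \ref{def:L}; on $I$ the potential $\pi^2/h^2$ is bounded by $\pi^2 + O(L^{-2})$, so the Rayleigh quotient for $\mathcal{L}$ is $\pi^2 + O(L^{-2})$. For the matching lower bound and for the localization claim, use an Agmon-type exponential-decay argument: outside a neighborhood of $I$ the maximality of $L$ in Definition \ref{def:L} forces $h(x_1) \le 1 - cL^{-2}$, hence $\pi^2/h^2 - E \ge cL^{-2}$, so $\phi$ decays on the scale $L$. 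This yields the key concentration estimate $\norm{\phi}_{L^2([a,b])}^2 \sim L\,\norm{\phi}_{L^\infty([a,b])}^2$, and also shows that the horizontal superlevel set $\{|\phi|>\tfrac12\max|\phi|\}$ has length $\sim L$.

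The theorem then follows quickly. Since $\int_{f_1(x_1)}^{f_2(x_1)} |\Psi(x_1,x_2)|^2\,dx_2 = 1$ and $\norm{\Psi}_{L^\infty(\Omega)} \sim 1$, one obtains $\norm{v}_{L^2(\Omega)}^2 = \int |\phi(x_1)|^2\,dx_1 \sim L\norm{\phi}_{L^\infty}^2 \sim L\,\norm{v}_{L^\infty(\Omega)}^2$. The closeness $u\approx v$ then transfers the comparison to $u$. Similarly, the superlevel set $\{u > \tfrac12\max u\}$ is, up to the error in the approximation, the product of the horizontal superlevel set of $\phi$ (which sits in an interval of length $\sim L$) with the corresponding sine-profile in $x_2$ (which has vertical extent $\sim 1 \le L$), and so has diameter $\sim L$.

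The main obstacle is establishing the quantitative approximation $u \approx v$ to an accuracy sharp enough to preserve the $L^2/L^\infty$ ratio within universal constants. This is the technical heart of the Grieser-Jerison analysis: one must absorb the cross-terms in $(\Delta+\lambda)(u-v)$, use log-concavity together with elliptic regularity to control derivatives of $u$, and treat the transition regions near $a$ and $b$ where $h$ becomes small and $\Psi$ is less well controlled. The terms $\partial_{x_1}\Psi$ involve $h'(x_1)$, which is only well behaved when $f_1,f_2$ are smooth, and in general one must first mollify or refine $v$ with an Airy-type corrector near the maximum of $h$ to get the right constants in both directions. This approximation step, rather than the subsequent one-dimensional spectral analysis, is the genuinely delicate part.
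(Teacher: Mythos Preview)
The paper does not give its own proof of this theorem; it is quoted as a result of Grieser--Jerison \cite{GJ1}, with only the surrounding discussion (the operator $\mathcal{L}$, the approximate separation of variables, and Definition~\ref{def:L}) indicating the method. Your sketch is a faithful outline of precisely that Grieser--Jerison strategy, so in that sense it matches what the paper invokes.
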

Using the definition of $L$ from Definition \ref{def:L} to compare the estimate in Theorem \ref{thm:L2-2d} with the lower bound in Theorem \ref{thm:Main} in two dimensions, we note the following. When $L$ is comparable to $N_1^{1/3}$, such as for a circular sector or right angled triangle,  the bounds in the two theorems agree and in particular the lower bound in Theorem \ref{thm:Main} is sharp. However, for $L\gg N_1^{1/3}$ Theorem \ref{thm:L2-2d} says that the eigenfunction $u$ has spread out by more than $N_1^{1/3}$ in the $x_1$-direction and so the $L^2(\Omega)$-norm of $u$ is larger than that given in Theorem \ref{thm:Main}.

In higher dimensions, we can begin an analogous discussion. Consider the thin sector in $\R^n$ of the form
\begin{align*}
\{(r,\theta):0<r<N_1,\theta\in D^{n-1}\},
\end{align*}
where $D^{n-1}$ is a geodesic disc of radius $1$ in $S^{n-1}$. As shown in \cite{vdB}, for this domain, the lower bound given in Theorem \ref{thm:Main} is sharp. If the domain $\Omega$ is instead a parallelepiped, then the superlevel set $\{u>\tfrac{1}{2}\max_{\Omega}u\}$ takes up a uniform portion of the whole domain. For a parallelepiped, this leads to the estimate
\begin{align*}
\norm{u}_{L^2(\Omega)} \sim \text{ Volume}(\Omega)^{1/2}\norm{u}_{L^{\infty}(\Omega)} \sim \prod_{j=1}^{n}N_j^{1/2} \norm{u}_{L^{\infty}(\Omega)}.
\end{align*}
Therefore, in dimensions higher than two it is natural to ask whether one can define analogous length scales to that of $L$ from Definition \ref{def:L} which govern the shape of the first eigenfunction.
\begin{ques} \label{ques1}
 Fix $c$, with $0<c<1$. Can we use the geometry of $\Omega$ to determine $n$ length scales $M_1\geq M_2\geq\cdots \geq M_{n}$, and $n$ directions $v_1,v_2,\ldots,v_{n}$ in $\R^{n}$ such that the John ellipsoid of
\begin{align*}
\{x\in\Omega:u(x)>c\max_{\Omega}u\}
\end{align*}
has axes along the directions $v_j$ and of lengths comparable to $M_j$?
\end{ques}
This question is open in any dimension higher than two. Let us normalize $\Omega\subset \R^n$ so that it has inner radius equal to $1$, and its projection onto the $x_n$-axis is of length comparable to $1$. Then, we can certainly choose $v_{n}$ to point in the $x_n$-direction and take $M_{n}\sim1$. The question is then to determine the remaining $n-1$ length scales and orientation. The results of this paper show that the lengths $M_j$ must satisfy the lower bound $M_j\geq c N_j^{1/3}$. In \cite{B1}, another preliminary step towards answering this question has been carried out: Consider the operator 
\begin{align} \label{eqn:potential}
-\Delta_{x_1,x_2} +  \frac{\pi^2}{h(x_1,x_2)^2}, 
\end{align}
with Dirichlet boundary conditions on a two dimensional convex domain $D$. Here $h(x_1,x_2)$ is a concave function on $D$, attaining a minimum of $1$. The first eigenfunction of this operator still has convex superlevel sets and in \cite{B1}, length scales $L_1$, $L_2$ and an orientation of the domain $D$ are found in terms of $D$ and $h$, which govern the intermediate level sets of this first eigenfunction. In particular, the $L^2(D)$-norm is comparable to $L_1^{1/2}L_2^{1/2}$ multiplied by the $L^{\infty}(D)$-norm of the eigenfunction. 

The operator in \eqref{eqn:potential} can be used to make progress of answering the question in the three dimensional case. For three dimensional domains of the form
\begin{align*}
\Omega = \{(x_1,x_2,x_3)\in \R^3: (x_1,x_2)\in D, 0\leq x_3 \leq h(x_1,x_2)\},
\end{align*}
an approximate separation of variables into $(x_1,x_2)$ and $x_3$-variables leads to the operator in \eqref{eqn:potential}. It is shown in \cite{B2} that when $L_1$ and $L_2$ are sufficiently close in size ($L_1 \leq L_2^{3/2-}$), this separation of variables provides a good approximation to the first eigenfunction of $\Omega$. In particular, referring back to Question \ref{ques1}, in this case we can set $M_1 = L_1$, $M_2 = L_2$, $M_3 = 1$, and the orientation of $D$ also governs the behaviour of the first Dirichlet eigenfunction of $\Omega$. To make further progress towards fully answering Question \ref{ques1}, even in the three dimensional case, a key step is to determine the orientation of the superlevel sets of $u$, as in general it will not be the same as that of $\Omega$ itself. Especially as the dimension of $\Omega$ increases, it is unclear how to determine this orientation.
 
%
%
%

\Addresses

\end{document}